\def\div{\operatorname{div}}
\begin{document}

\title*{Mathematical analysis of a partial differential equation system on the thickness}
\titlerunning{Mathematical analysis of a partial differential equation system on the thickness}

\author{Atsushi~Nakayasu\orcidID{0000-0002-2008-7321} and\\ Takayuki~Yamada\orcidID{0000-0002-5349-6690}}
\authorrunning{Atsushi~Nakayasu and Takayuki~Yamada}
\institute{Atsushi~Nakayasu \at Graduate School of Engineering, The University of Tokyo, Yayoi 2-11-16, Bunkyo–ku, Tokyo 113-8656, Japan, \email{ankys@g.ecc.u-tokyo.ac.jp}
\and Takayuki~Yamada \at Graduate School of Engineering, The University of Tokyo, Yayoi 2-11-16, Bunkyo–ku, Tokyo 113-8656, Japan, \email{t.yamada@mech.t.u-tokyo.ac.jp}}

\maketitle

\abstract*{
This study focuses on linear partial differential equation (PDE) systems that arise in topology optimization where the thickness of a structure is constrained.
The thickness derived from the PDE is a fictitious one, and the key challenge of this work is to verify its equivalence to the intuitive, geometrically defined thickness.
The main difficulty lies in that while intuitive thickness is determined solely by the shape, the thickness defined by the PDE depends not only on the shape but also on the entire domain and the diffusion coefficients used in solving the PDE.
In this paper, we demonstrate that the thickness of an infinite, straight film with constant thickness as a simple shape is equivalent within a general domain.
The proof involves constructing a reference solution within a special domain and evaluating the difference using the maximum (modulus) principle and an interior $H^1$ estimate.
Additionally, we provide an estimate of the dependence of thickness on the diffusion coefficient.
}

\abstract{
This study focuses on linear partial differential equation (PDE) systems that arise in topology optimization where the thickness of a structure is constrained.
The thickness derived from the PDE is a fictitious one, and the key challenge of this work is to verify its equivalence to the intuitive, geometrically defined thickness.
The main difficulty lies in that while intuitive thickness is determined solely by the shape, the thickness defined by the PDE depends not only on the shape but also on the entire domain and the diffusion coefficients used in solving the PDE.
In this paper, we demonstrate that the thickness of an infinite, straight film as a simple shape with constant thickness is equivalent within a general domain.
The proof involves constructing a reference solution within a special domain and evaluating the difference using the maximum (modulus) principle and an interior $H^1$ estimate.
Additionally, we provide an estimate of the dependence of thickness on the diffusion coefficient.
}

\section{Introduction}
\label{s:intro}

Structural optimization is a design method that reduces material waste and improves performance by optimizing the size or shape of a structure, or topology of the material under certain conditions.
However, one issue is that the solutions obtained by structural optimization, especially topology optimization, are not always easily feasible in manufacturing.

In recent years, optimization methods that take manufacturability constraints into account have been attracting attention, and thickness adjustment is particularly important.
For example, Allaire, Jouve, and Michailidis propose optimization with thickness constraints based on a signed distance function in \cite{AJM16}.
In addition, Carroll and Guest proposed a method to optimize a structure with a certain thickness using a discrete object projection method \cite{CG22}.
There is also research on constraining thickness using partial differential equations, and Yamada and his coauthors proposed a method to describe geometric features of shapes, including thickness, using a fictitious physical model \cite{Y19, Y19b, SOY22}.

However, the thickness in \cite{Y19, Y19b, SOY22} is a fictitious one defined by a fictitious physical model, and it is introduced independently of the thickness that we intuitively have or that is important in manufacturability.
Therefore, in this study, we aim to investigate whether the thickness defined by this partial differential equation coincides with intuitive thickness.

This topic is by no means trivial and it is difficult because while intuitive thickness is determined by the shape, thickness defined by the partial differential equation is determined not only by the shape domain $\Omega$ but also by the total domain $D$ and the diffusion coefficient $a$ for solving the equation.
In previous work, we have only analytically verified that in one-dimensional space the fictitious thickness coincides with the thickness of the shape (in this case, the length of the section), and numerically verified the thickness obtained by solving the partial differential equation using the finite element method in two-dimensional space \cite{Y19}.
Analytical verification in two or higher dimensions is not well understood, and this study will address this issue.

Another problem is that the definition of intuitive thickness is not clear for general two-dimensional shapes.
Thickness is generally a local concept which varies depending on the position of a point within a shape, and in such cases the problem becomes complicated.
Therefore, in this study, we aim to show it only for specific shapes with a constant thickness, such as a straight film extending infinitely.
The main result (Theorem \ref{t:film}) is that the fictitious thickness calculated by solving partial differential equations from a film shape and a general global domain converges to the thickness of the film when the diffusion coefficient vanishes.

The proof involves the following steps.
First, the general global domain is replaced with a circumscribed membrane-like domain and the reference solution there is analytically obtained.
This calculation is reduced to a one-dimensional problem in space,
and the reference solution is easy to handle.
Next, in view of the maximum principle, an $L^\infty$ estimate of the difference between the solution in the original global domain and the reference solution,
and further, an $H^1$ estimate is obtained by an internal $H^1$ estimate, showing that the fictitious thickness converges to an intuitive thickness.

This paper is organized as follows.
First, in Section \ref{s:eq}, we introduce the linear partial differential equation system, which is the main research subject, and the fictitious thickness defined there.
In Section \ref{s:1dim}, we review the calculation of exact solutions for one-dimensional problems.
In Section \ref{s:film}, we consider the thickness of a film in a global region surrounding it with periodic undulations allowed.
This section states Theorem \ref{t:film} that is the main result of this paper.
Section \ref{s:assum} numerically verifies whether the mathematical assumptions made in Theorem \ref{t:film} are necessary.
Section \ref{s:further} considers how to discuss other shapes such as an annulus.

\section{Partial differential equation system to be studied}
\label{s:eq}

In the $N$-dimensional Euclidean space $\mathbb{R}^N$, prepare a \emph{global domain} $D$ that contains the domain representing the shape $\Omega$ to be analyzed.
The inside of the shape $\Omega$ is called the \emph{shape domain}, and the outside $D\setminus \Omega$ is called the \emph{void domain}.
The boundary between the shape and void domains is denoted by $\Gamma = \partial \Omega$.
To distinguish the shape domain from the void domain, define the characteristic function $\chi \in L^\infty(D)$ by
\[
\chi(x) =
\begin{cases}
1 & \text{if $x \in \Omega$,} \\
0 & \text{if $x \notin \Omega$.} \\
\end{cases}
\]

The linear partial differential equation system to be studied in this paper is of the following form:
\begin{equation}
\label{e:eq}
\begin{cases}
-a\Delta \vb*{s}+(1-\chi)\vb*{s} = -\nabla\chi & \text{in $D$,} \\
\vb*{s} = \vb*{0} & \text{on $\partial D$.} \\
\end{cases}
\end{equation}
Here, $a$ is a positive constant parameter for regularizing the solution $\vb*{s}$, and the case of $a \to 0$ is of most interest.
The solution $\vb*{s}$ is a state variable for extracting the features of the shape $D$, and it is an $N$-dimensional vector-valued function.
For example, it is known that when $a \to 0$, the support of $\vb*{s}$ concentrates on the boundary $\Gamma$ of the shape and converges to the unit normal vector at the point on the boundary \cite{HKTMY20}.
Another example is that a simplified equation is to be used for calculating signed distance functions \cite{HMOSY24}.
In addition, the thickness of the shape, which is the main subject of this paper, is expected to be obtained as the limit of the \emph{(local) thickness function}
\[
h_{\vb*{s}}^a = \frac{2}{\sqrt{a}\div\vb*{s}}
\]
as $a \to 0$.

To solve this equation, we can consider it in weak form as stated below.
A natural regularity of the solution is given by $\vb*{s} \in H^1_0(D)^N$,
that is, each component $s^{x_i}$ of $\vb*{s}$ has square integrable first derivative and satisfies the Dirichlet boundary condition $s^{x_i} = 0$ on $\partial D$.
The weak form is as follows:
\begin{equation}
\label{e:wf}
a\int_{D} \nabla \vb*{s}\colon\nabla \vb*{u}+\int_{D\setminus \Omega}\vb*{s}\cdot \vb*{u} = \int_{\Omega} \div \vb*{u}
\quad \forall \vb*{u} \in H^1_0(D)^N,
\end{equation}
where $\nabla \vb*{s}\colon\nabla \vb*{u}$ is the Frobenius inner product of the Jacobian matricies $\nabla \vb*{s} = (s^{x_i}_{x_j})$ and $\nabla \vb*{u} = (u^{x_i}_{x_j})$,
say $\nabla \vb*{s}\colon\nabla \vb*{u} = \sum_{i, j}s^{x_i}_{x_j}u^{x_i}_{x_j}$.
Note that the right-hand side of the weak form \eqref{e:wf} can be calculated further, and letting $\vb{n}$ be the outward unit normal vector on the boundary of $D$, we have
\[
\int_{D} \div \vb*{u} = \int_\Gamma \vb*{u}\cdot \vb{n}.
\]

This weak form \eqref{e:wf} is said to be a \emph{non-homogeneous} one because the term on the right-hand side exists.
A \emph{homogeneous} equation is one that has no terms on the right-hand side.
\begin{equation}
\label{e:hwf}
a\int_{D} \nabla \vb*{d}\colon\nabla \vb*{u}+\int_{D\setminus \Omega}\vb*{d}\cdot \vb*{u} = 0
\quad \forall \vb*{u} \in H^1_0(D)^N.
\end{equation}
Note that the difference $\vb*{d} = \vb*{s}_2-\vb*{s}_1$ between two solutions $\vb*{s}_1, \vb*{s}_2 \in H^1(D)^N$ of a non-homogeneous equation (which do not necessarily satisfy the boundary conditions) is a solution to a homogeneous equation.

\section{Exact solutions for one-dimensional problems}
\label{s:1dim}

One-dimensional problems when $N = 1$ are easy to handle because the exact solution can be calculated by hand.
Here we consider the cases
\[
D = (b_l, b_r),
\quad \Omega = (f_l, f_r)
\]
with $b_l < f_l < f_r < b_r$.
We set $T = f_r-f_l$, which corresponds to the thickness of the shape $\Omega$.

The solution in this case is exponential inside the void domain $(b_l, f_l)\cup(f_r, b_r)$ because the equation \eqref{e:eq} becomes
\[
-a s''+s = 0 \quad \text{in $(b_l, f_l)\cup(f_r, b_r)$}
\]
and inside the shape domain $(f_l, f_r)$ it becomes a linear one because \eqref{e:eq} becomes
\[
a s'' = 0 \quad \text{in $(f_l, f_r)$}.
\]
Indeed, using integral constants $C_l$ and $C_r$, we have
\[
s(x) =
\begin{cases}
-C_l\sinh \frac{x-b_l}{\sqrt{a}} & \text{for $b_l \le x \le f_l$,} \\
-C_r\sinh \frac{x-b_r}{\sqrt{a}} & \text{for $f_r \le x \le b_r$.} \\
\end{cases}
\]
In the one-dimensional space, since $H^1(D) \subset C^{0, 1/2}(D)$, in $f_l \le x \le f_r$, we simply need to connect the two parts continuously with a linear expression.
The slope at this time is given by
\[
s^* = \frac{C_l\sinh\frac{f_l-b_l}{\sqrt{a}}+C_r\sinh\frac{b_r-f_r}{\sqrt{a}}}{T}
\]

The constants $C_l$ and $C_r$ are determined using the weak form
\[
a\int_{b_l}^{b_r} s'(x)u'(x)\dd{x}+\qty(\int_{b_l}^{b_r}-\int_{f_l}^{f_r}) s(x)u(x)\dd{x}
= \int_{f_l}^{f_r} u'(x)\dd{x}
= u(f_r)-u(f_l)
\]
Here, if we test the piecewise linear function such that
\[
u(b_l) = 0, \quad u(f_l) = 1, \quad u(f_r) = 0, \quad u(b_r) = 0,
\]
we have
\[
a\int_{b_l}^{f_l} s'(x)u'(x)\dd{x}+a\int_{f_l}^{f_r} s'(x)u'(x)\dd{x}+\int_{b_l}^{f_l} s(x)u(x)\dd{x}
= -1.
\]
Note that since $-a s''+s = 0$ holds on $(b_l, f_l)$,
\[
\int_{b_l}^{f_l} s(x)u(x)\dd{x}
= a\int_{b_l}^{f_l} s''(x)u(x)\dd{x}
= [a s'(x)u(x)]_{b_l}^{f_l}-a\int_{b_l}^{f_l} s'(x)u'(x)\dd{x},
\]
\[
[a s'(x)u(x)]_{b_l}^{f_l} = a s'(f_l-0),
\]
\[
a\int_{f_l}^{f_r} s'(x)u'(x)\dd{x} = -a s^*,
\]
where $s'(f_l-0)$ denotes the the left limit of $s'$ at $f_l$.
Therefore,
\[
a s^*-a s'(f_l-0) = 1
\]
is obtained.
Doing the same around $x = f_r$, we have
\[
a s^*-a s'(f_r+0) = 1,
\]
where $s'(f_r+0)$ denotes the the right limit of $s'$ at $f_r$.
From here, $C_l$ and $C_r$ satisfy the simultaneous linear equations
\[
\mqty(\frac{a}{T}\sinh\frac{f_l-b_l}{\sqrt{a}}+\sqrt{a}\cosh\frac{f_l-b_l}{\sqrt{a}} & \frac{a}{T}\sinh\frac{b_r-f_r}{\sqrt{a}} \\ \frac{a}{T}\sinh\frac{f_l-b_l}{\sqrt{a}} & \frac{a}{T}\sinh\frac{b_r-f_r}{\sqrt{a}}+\sqrt{a}\cosh\frac{b_r-f_r}{\sqrt{a}})\mqty(C_l \\ C_r) = \mqty(1 \\ 1).
\]
For simplicity, let $\alpha = \frac{f_l-b_l}{\sqrt{a}}$, $\beta = \frac{b_r-f_r}{\sqrt{a}}$, and $k = \frac{T}{\sqrt{a}}$.
We then have
\[
\mqty(\sinh\alpha+k\cosh\alpha & \sinh\beta \\ \sinh\alpha & \sinh\beta+k\cosh\beta)\mqty(C_l \\ C_r) = \frac{k}{\sqrt{a}}\mqty(1 \\ 1).
\]
This solves
\[
\mqty(C_l \\ C_r)
= \frac{1}{\sqrt{a}}\frac{k}{\sinh(\alpha+\beta)+k\cosh\alpha\cosh\beta}\mqty(\cosh\alpha \\ \cosh\beta).
\]
Therefore, as $a \to 0$ we got
\[
\sqrt{a}s^*
= \frac{1}{T}\frac{k\sinh(\alpha+\beta)}{\sinh(\alpha+\beta)+k\cosh\alpha\cosh\beta}
\to \frac{2}{T}.
\]
As stated at the beginning, the thickness function $h_s^a(x) = \frac{2}{\sqrt{a}s^*}$ converges to the thickness $T = f_r-f_l$.

Moreover, we have the estimate on convergence speed as follows.
\[
\begin{aligned}
h_s^a-T
&= T\qty(\frac{2}{k}+\frac{2\cosh\alpha\cosh\beta}{\sinh(\alpha+\beta)}-1)
= T\qty(\frac{2}{k}+\frac{1-\tanh\alpha+1-\tanh\beta}{\tanh\alpha+\tanh\beta}) \\
&\le T\qty(\frac{2}{k}+\frac{1-\tanh\alpha}{\tanh\alpha}+\frac{1-\tanh\beta}{\tanh\beta})
\le T\qty(\frac{2}{k}+2\exp(-2\alpha)+2\exp(-2\beta)) \\
&= 2\sqrt{a}+2 T\qty(\exp\qty(-2\frac{f_l-b_l}{\sqrt{a}})+\exp\qty(-2\frac{b_r-f_r}{\sqrt{a}})).
\end{aligned}
\]

The contents of this section can be summarized as follows.

\begin{theorem}[Exact solution for one-dimensional space]
When the global domain and shape domain are given by
\[
D = (b_l, b_r),
\quad \Omega = (f_l, f_r),
\]
then for $a > 0$ the function
\[
s(x) =
\begin{cases}
-C_l\sinh \frac{x-b_l}{\sqrt{a}} & \text{for $x \le f_l$,} \\
-C_r\sinh \frac{x-b_r}{\sqrt{a}} & \text{for $x \ge f_r$,} \\
\end{cases}
\]
\[
\mqty(C_l \\ C_r)
= \frac{1}{\sqrt{a}}\frac{k}{\sinh(\alpha+\beta)+k\cosh\alpha\cosh\beta}\mqty(\cosh\alpha \\ \cosh\beta),
\]
with $\alpha = \frac{f_l-b_l}{\sqrt{a}} > 0$, $\beta = \frac{b_r-f_r}{\sqrt{a}} > 0$, $k = \frac{T}{\sqrt{a}}$ is the solution of the equation \eqref{e:eq},
and inside $\Omega$, the thickness function $h_s^a(x) = \frac{2}{\sqrt{a}s'(x)}$ is a constant and its value converges to $T = f_r-f_l$ when $a \to 0$.
Moreover, we have
\[
0 \le h_s^a-T \le 2\sqrt{a}+4 T\exp\qty(-2\frac{m}{\sqrt{a}})
\]
with $m = \min\{ b_r-f_r, f_l-b_l \} > 0$.
\end{theorem}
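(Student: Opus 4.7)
The plan is to organize the material into three stages: first writing down the form of the solution piecewise from the ODE, next pinning down the two free constants $C_l, C_r$ from the weak form by testing against suitable hat functions, and finally passing to the limit $a \to 0$ and extracting the convergence rate.

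For the first stage, I would argue that on each connected component of $D \setminus \overline{\Omega}$ the PDE reduces to the linear ODE $-as'' + s = 0$, whose general solution is spanned by $\sinh((x-x_0)/\sqrt{a})$ and $\cosh((x-x_0)/\sqrt{a})$. Imposing the Dirichlet condition at the endpoint of $D$ on each interval forces the stated form $-C_l\sinh((x-b_l)/\sqrt{a})$ on $(b_l, f_l)$ and $-C_r\sinh((x-b_r)/\sqrt{a})$ on $(f_r, b_r)$. Inside $\Omega$ the PDE reduces to $as'' = 0$, so $s$ is affine; since $H^1 \hookrightarrow C^{0, 1/2}$ in one space dimension, continuity at $f_l$ and $f_r$ pins down the affine piece, and in particular $s'$ is constant on $\Omega$, so the thickness function is constant there as claimed.

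For the second stage, I would test the weak form against the continuous piecewise linear function equal to $1$ at $f_l$ and vanishing at $b_l, f_r, b_r$. Using $-as'' + s = 0$ on $(b_l, f_l)$, integration by parts collapses the void-domain $L^2$ contribution to a boundary term, leaving the identity $as^* - as'(f_l - 0) = 1$; the symmetric hat at $f_r$ yields $as^* - as'(f_r + 0) = 1$. With the substitutions $\alpha = (f_l-b_l)/\sqrt{a}$, $\beta = (b_r-f_r)/\sqrt{a}$, $k = T/\sqrt{a}$, these become a $2\times 2$ linear system in $C_l, C_r$ whose determinant is $k(\sinh(\alpha+\beta) + k\cosh\alpha\cosh\beta)$, and Cramer's rule yields the stated closed form.

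Finally, for the limit and the rate I would compute $\sqrt{a}\,s^* = (k\sinh(\alpha+\beta))/(T(\sinh(\alpha+\beta)+k\cosh\alpha\cosh\beta))$ directly and observe that it tends to $2/T$, giving $h_s^a \to T$. For the two-sided bound I would rewrite
\[
h_s^a - T = T\!\left(\frac{2}{k} + \frac{(1-\tanh\alpha) + (1-\tanh\beta)}{\tanh\alpha + \tanh\beta}\right),
\]
from which the lower bound $h_s^a \ge T$ is immediate since both summands are positive. Dropping the denominator factor $\tanh\alpha + \tanh\beta$ against either $\tanh\alpha$ or $\tanh\beta$ separately and applying the elementary inequality $1 - \tanh x \le 2e^{-2x}$ for $x > 0$ produces $h_s^a - T \le 2\sqrt{a} + 2T(e^{-2\alpha} + e^{-2\beta})$, which is then majorized using $\alpha, \beta \ge m/\sqrt{a}$. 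The main obstacle is purely algebraic bookkeeping: inverting the $2\times 2$ system cleanly and rearranging the closed form into a combination of $\tanh$s amenable to the exponential estimate. There is no conceptual difficulty beyond standard ODE and weak-form manipulations.
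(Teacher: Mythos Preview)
Your proposal is correct and matches the paper's proof essentially step for step: the same piecewise ODE analysis with the $H^1\hookrightarrow C^{0,1/2}$ gluing, the same two hat-function tests producing $as^*-as'(f_l-0)=1$ and $as^*-as'(f_r+0)=1$, the same $2\times 2$ system in $(C_l,C_r)$ with the substitutions $\alpha,\beta,k$, and the same $\tanh$ rearrangement together with $1-\tanh x\le 2e^{-2x}$ for the rate. There is nothing to add.
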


\section{Straight film shapes}
\label{s:film}

Consider the following domains in $x y$ space with higher dimension, which is periodic in the $x$ direction.
Let $\mathbb{T}^N$ be an $N$-dimensional flat torus $\mathbb{T}^N = \mathbb{R}^N/\mathbb{Z}^N$
and consider
\[
D = \{ (x, y) \in \mathbb{T}^N\times\mathbb{R} \mid b_l(x) < y < b_r(x)\},
\quad \Omega = \mathbb{T}^N\times(f_l, f_r).
\]
In other words, the shape domain $\Omega$ is a straight film, and the boundary of the global domain $D$ is allowed to wavy.
The functions $b_l(x)$ and $b_r(x)$ which define the boundary of the global domain are assumed to be periodic and smooth enough to allow partial integration.

In this case, it turns out that the $x$ component of the solution $\vb*{s} = (s^x, s^y) \in H^1_0(D)^{N+1}$, that is, $s^x$ is identically $0$.
Indeed, putting the test function $\vb*{u} = (u^x, 0)$, $u^x \in H^1_0(D)^N$ into the weak form
\begin{equation}
\label{e:wf2}
a\int_D \nabla \vb*{s}\colon\nabla \vb*{u}+\int_{D\setminus \Omega}\vb*{s}\cdot \vb*{u} = \int_{\Omega} \div \vb*{u}
\quad \forall u \in H^1_0(D)^{N+1},
\end{equation}
we have
\[
a\int_D \nabla s^x\cdot\nabla u^x+\int_{D\setminus \Omega}s^x u^x = \int_{\Omega} \div_x u^x
\quad \forall u^x \in H^1_0(D)^N.
\]
Here, since $\Omega$ is a rectangular domain and is periodic in the $x$ direction,
\[
\int_{\Omega} \div_x u^x = \int_{f_l}^{f_r}\int_{\mathbb{T}^N} \div_x u^x(x, y)\dd{x}\dd{y} = 0.
\]
Therefore, $s^x \in H^1_0(D)^N$ is a solution to a homogeneous equation, so $s^x = 0$.
In the following, we consider the single equation that the $y$ component $s = s^y$ of $\vb*{s}$ satisfies:
\begin{equation}
\label{e:swf}
a\int_D \nabla s\cdot\nabla u+\int_{D\setminus \Omega}s u = \int_{\Omega} u_y
\quad \forall u \in H^1_0(D).
\end{equation}

Here, our goal is to replace the entire wavy domain $D$ with a rectangular domain in order to reduce the higher dimensional problem to one dimension.
For simplicity, let us set
\[
b_l^\land = \min_{\mathbb{T}^N} b_l,
\quad b_l^\lor = \max_{\mathbb{T}^N} b_l,
\quad b_r^\land = \min_{\mathbb{T}^N} b_r,
\quad b_r^\lor = \max_{\mathbb{T}^N} b_r
\]
and consider
\[
\bar{D} = \mathbb{T}^N\times(b_l^\land, b_r^\lor),
\]
which is a domain circumscribing the original global domain $D$.
Solve the system \eqref{e:eq} by applying boundary conditions at its boundaries $\partial\bar{D}$.
Now, the solution $\bar{\vb*{s}}(x, y) = (0, \bar{s}(y))$ can be calculated in the same way as in the one-dimensional case:
\[
\bar{s}(y) =
\begin{cases}
-C_l\sinh \frac{y-b_l^\land}{\sqrt{a}} & \text{for $y \le f_l$,} \\
-C_r\sinh \frac{y-b_r^\lor}{\sqrt{a}} & \text{for $y \ge f_r$,} \\
\end{cases}
\]
\[
\mqty(C_l \\ C_r)
= \frac{1}{\sqrt{a}}\frac{k}{\sinh(\alpha+\beta)+k\cosh\alpha\cosh\beta}\mqty(\cosh\alpha \\ \cosh\beta),
\]
\[
\alpha = \frac{f_l-b_l^\land}{\sqrt{a}} > 0,
\quad \beta = \frac{b_r^\lor-f_r}{\sqrt{a}} > 0,
\quad k = \frac{T}{\sqrt{a}}.
\]
From this,
\[
C_l \le \frac{1}{\sqrt{a}}\frac{1}{\cosh\beta},
\quad C_r \le \frac{1}{\sqrt{a}}\frac{1}{\cosh\alpha},
\]
and hence
we have
\[
-\frac{1}{\sqrt{a}}\frac{\sinh\frac{b_r^\lor-b_r^\land}{\sqrt{a}}}{\cosh\frac{f_l-b_l^\land}{\sqrt{a}}}
\le s(x, y)-\bar{s}(y)
\le \frac{1}{\sqrt{a}}\frac{\sinh\frac{b_l^\lor-b_l^\land}{\sqrt{a}}}{\cosh\frac{b_r^\lor-f_r}{\sqrt{a}}}
\]
for all $(x, y) \in \partial D$.
Therefore, if we assume that the waviness of the boundary $\partial D$ is sufficiently small,
\begin{equation}
\label{e:assum}
R = \min\{ b_r^\lor-f_r, f_l-b_l^\land \}-\max\{ b_r^\lor-b_r^\land, b_l^\lor-b_l^\land \} > 0
\end{equation}
we obtain
\[
\sup_{(x, y) \in \partial D}\abs{s(x, y)-\bar{s}(y)}
\le \frac{1}{\sqrt{a}}\exp\qty(-\frac{R}{\sqrt{a}}) =: C_a
\]
Note that $C_a \to 0$ when $a \to 0$.

In order to obtain the estimate in the domain $D$ from the estimate on the boundary $\partial D$,
we will use the following lemma.

\begin{lemma}[Maximum principle for homogeneous equations]
Let $d \in H^1(D)$ be a weak solution that satisfies the homogeneous equation
\begin{equation}
\label{e:shwf}
a\int_D \nabla d\cdot\nabla u+\int_{D\setminus \Omega}d u = 0
\quad \forall u \in H^1_0(D).
\end{equation}
($d$ does not necessarily satisfy the boundary conditions.)
Then, for a constant $c_- \le 0 \le c_+$, if $c_- \le d \le c_+$ on $\partial D$, then $c_- \le d \le c_+$ on $D$ holds.
\end{lemma}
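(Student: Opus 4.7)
The plan is to use Stampacchia's truncation method. By linearity of \eqref{e:shwf}, if $d$ solves the equation then so does $-d$, and the boundary bounds for $-d$ are $-c_+ \le -d \le -c_-$ with $-c_- \ge 0$. Hence it suffices to prove only the upper bound $d \le c_+$ on $D$; the lower bound then follows by applying the same argument to $-d$ with upper constant $-c_-$.

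For the upper bound, I would test \eqref{e:shwf} with $v := (d - c_+)_+$. Since $c_+$ is a constant, $d - c_+ \in H^1(D)$, and the hypothesis $d \le c_+$ on $\partial D$, interpreted in the trace sense, guarantees that the trace of $v$ vanishes, so $v \in H^1_0(D)$ is admissible. The weak gradient of $v$ equals $\nabla d$ on $\{d > c_+\}$ and vanishes elsewhere, so the diffusion term rewrites as
\[
a\int_D \nabla d \cdot \nabla v = a\int_D \abs{\nabla v}^2 \ge 0.
\]
On the set where $v > 0$ one has $d > c_+ \ge 0$, so $d\,v \ge 0$ pointwise, and consequently $\int_{D\setminus \Omega} d\,v \ge 0$. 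The weak form \eqref{e:shwf} then expresses a sum of two non-negative quantities as zero, which forces $\int_D \abs{\nabla v}^2 = 0$.

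Since $v \in H^1_0(D)$ on the bounded domain $D$, the Poincar\'e inequality yields $v \equiv 0$, i.e., $d \le c_+$ almost everywhere in $D$. The main technical point is the inclusion $v \in H^1_0(D)$, which rests on the standard Stampacchia lemma on preservation of $H^1$ under Lipschitz truncation together with the sign condition on the trace of $d - c_+$; once this is in hand, the remainder of the proof is a routine sign chase and an application of Poincar\'e. Note that the hypotheses $c_+ \ge 0$ and $c_- \le 0$ enter precisely at the step where one uses $d \cdot v \ge 0$ on $\{v > 0\}$, and they cannot be dropped without modifying the zeroth-order term's treatment.
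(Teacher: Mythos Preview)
Your proof is correct and follows essentially the same approach as the paper: both test the weak form with the Stampacchia truncation $(d-c_+)_+$, use $c_+\ge 0$ to make the zeroth-order term nonnegative, and conclude that the truncated function has vanishing gradient in $H^1_0(D)$ and hence is identically zero. Your write-up is in fact a bit more careful than the paper's (explicit trace justification for $v\in H^1_0(D)$, explicit appeal to Poincar\'e), but the underlying argument is the same.
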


\begin{remark}
One of the direct consequences of this lemma is that the solution $d$ of the homogeneous equation \eqref{e:shwf} satisfies the maximum modulus principle \cite{KM12}
\[
\sup_{D}\abs{d} \le \sup_{\partial D}\abs{d}.
\]
\end{remark}

\begin{proof}
We only show that $d \le c_+ =: c$.
Let us take the test function
\[
u(x) = k(x)_+ = \max\{ 0, d(x)-c \}.
\]
Since $u \ge 0$ on $D$ and $u \in H^1_0(D)$,
it follows from the weak form \eqref{e:shwf} that
\[
a\int_D \nabla d\cdot\nabla k_+ +\int_{D\setminus \Omega}d k_+ \le 0.
\]
Notine that if $k_+ > 0$, then $d > c \ge 0$ we have
\[
a\int_D \abs{\nabla k_+}^2 \le 0.
\]
Therefore, $\nabla k_+ = 0$ on $D$, so $k_+ = 0$, or $d \le c$.
One can show that $d \ge c_-$ by a similar way.
\end{proof}

Applying the maximum modulus principle to $d(x, y) = s(x, y)-\bar{s}(y)$,
we have the $L^\infty$ estimate
\[
\sup_{(x, y) \in D}\abs{s(x, y)-\bar{s}(y)}
\le C_a.
\]

From here on, we will show the internal $H^1$ estimate.
The internal $H^1$ estimate is mentioned in \cite[Proof of Theorem 1 in Subsection 6.3.1]{E10} and \cite[Problem 8.2]{GT83},
but here we give a complete proof including the coefficients.

\begin{lemma}[Internal $H^1$ estimate for homogeneous equations]
Let $d \in H^1(D)$ be a weak solution that satisfies the homogeneous equation \eqref{e:shwf}.
($d$ does not necessarily satisfy the boundary conditions.)
We then have
\[
\int_{\Omega} \abs{\nabla d}^2
\le \frac{2}{\min\{ b_r^\land-f_r, f_l-b_l^\lor \}^2}\int_{D\setminus \Omega} d^2.
\]
\end{lemma}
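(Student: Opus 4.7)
The plan is to prove the lemma by a Caccioppoli-type interior energy estimate, tailored to the product structure of the shape $\Omega = \mathbb{T}^N \times (f_l, f_r)$. First, I would set $m := \min\{b_r^\land - f_r,\, f_l - b_l^\lor\} > 0$ and construct a Lipschitz cutoff $\eta \colon D \to [0,1]$ depending only on the transverse variable $y$: equal to $1$ on $[f_l, f_r]$, decreasing linearly from $1$ to $0$ on each of the two buffer strips $[f_l - m, f_l]$ and $[f_r, f_r + m]$, and identically $0$ outside $[f_l - m, f_r + m]$. The key geometric observation is that $b_l(x) \le b_l^\lor \le f_l - m$ and $b_r(x) \ge b_r^\land \ge f_r + m$ for every $x \in \mathbb{T}^N$, so the support of $\eta$ sits strictly inside $\overline{D}$ and $\eta$ vanishes in the trace sense on $\partial D$; this makes $\eta^2 d \in H^1_0(D)$ an admissible test function in \eqref{e:shwf}. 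By construction $|\nabla \eta| \le 1/m$ and $\nabla \eta$ is supported in $D \setminus \Omega$.

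Next, I would substitute $u = \eta^2 d$ into \eqref{e:shwf}. Using $\nabla(\eta^2 d) = 2\eta d\, \nabla \eta + \eta^2 \nabla d$, together with $\eta \equiv 1$ and $\nabla \eta = 0$ on $\Omega$, the weak form collapses to
\[
a \int_\Omega |\nabla d|^2 + a \int_{D \setminus \Omega} \eta^2 |\nabla d|^2 + 2a \int_{D \setminus \Omega} \eta\, d\, \nabla \eta \cdot \nabla d + \int_{D \setminus \Omega} \eta^2 d^2 = 0.
\]
I would then bound the cross term by Young's inequality with the specific choice of weight $\varepsilon = 1/2$, namely $|2\eta d\, \nabla\eta \cdot \nabla d| \le \tfrac{1}{2}\eta^2 |\nabla d|^2 + 2 d^2 |\nabla\eta|^2$, absorb the resulting $\tfrac{a}{2}\int_{D\setminus\Omega}\eta^2 |\nabla d|^2$ into the positive $a\int_{D\setminus\Omega}\eta^2|\nabla d|^2$ term, discard the remaining nonnegative contributions, and use $|\nabla\eta|^2 \le 1/m^2$ on $D\setminus\Omega$ to arrive at
\[
\int_\Omega |\nabla d|^2 \;\le\; 2 \int_{D \setminus \Omega} d^2 |\nabla \eta|^2 \;\le\; \frac{2}{m^2} \int_{D \setminus \Omega} d^2.
\]

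The main subtle point, rather than a deep obstacle, will be the admissibility check: rigorously justifying that $\eta^2 d \in H^1_0(D)$ despite $d$ itself carrying no prescribed boundary trace. This relies on the geometric fact that $m > 0$, which guarantees that the support of $\eta$ stays at positive distance from the wavy portion of $\partial D$; it is precisely here that the product structure of $\Omega$ and the definition of $D$ via $b_l, b_r$ enter. Beyond this, the computation is essentially a standard Caccioppoli argument: on $\Omega$ the PDE reduces to $-a \Delta d = 0$ so $d$ is harmonic inside the film, while the outer $L^2$ control of $d$ needed to close the estimate is furnished directly by the zeroth-order term $(1 - \chi) d$ present in the equation. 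Notably, the prefactor $a$ cancels between the two sides, which is why the resulting bound is independent of the diffusion coefficient and can be combined cleanly with the $L^\infty$ bound $|s - \bar s| \le C_a$ obtained in the preceding discussion.
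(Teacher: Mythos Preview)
Your argument is correct and yields the stated constant, but it follows a different (though equally standard) route from the paper. The paper tests with $u = c\,d$ using a single power of a $C^{1,1}$ cutoff $c$ (a piecewise quadratic in $y$ with $|c''|\le 4/l^2$), rewrites the cross term as $d\,\nabla d\cdot\nabla c = \tfrac12\nabla(d^2)\cdot\nabla c$, and integrates by parts once more to transfer the derivative onto $c$; this produces $-\tfrac12\int_D \Delta c\,d^2$ directly, so the estimate follows from $|\Delta c|\le 4/l^2$ with no use of Young's inequality. Your approach instead uses only a Lipschitz cutoff $\eta$, squares it in the test function $\eta^2 d$, and closes via Young with the specific weight that lands on the same constant $2/m^2$. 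The trade-off: the paper's version needs a slightly smoother cutoff but avoids any parameter tuning, while yours works with the simplest piecewise-linear $\eta$ at the cost of a Young step. One small inaccuracy in your write-up: with $m$ equal to (not strictly less than) $\min\{b_r^\land-f_r,\,f_l-b_l^\lor\}$, the support of $\eta$ need not sit at \emph{positive} distance from $\partial D$; it may touch the boundary where $b_l$ or $b_r$ attains its extremum. This does not affect admissibility, since $\eta$ still vanishes there and hence $\eta^2 d$ has zero trace on $\partial D$, but the justification should be phrased in terms of vanishing trace rather than positive distance (the paper sidesteps this by taking $l$ strictly less than $m$ and then letting $l\to m$).
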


\begin{proof}
Note that for $l > 0$ the function
\[
y =
\begin{cases}
0 & (x \le 0), \\
\frac{2}{l^2}x^2 & (0 \le x \le l/2), \\
1-\frac{2}{l^2}(l-x)^2 & (l/2 \le x \le l), \\
1 & (x \ge l), \\
\end{cases}
\]
is a $W^{2, \infty} = C^{1, 1}$ function which satisfies
\[
|y''| \le \frac{4}{l^2}.
\]
This gives us existence of a cutoff function $c \in W^{2, \infty}(\mathbb{R})$ that satisfies the following for $l > 0$.
\[
0 \le c \le 1,
\quad \text{$c = 1$ on $[f_l, f_r]$,}
\quad \text{$c = 0$ on $(-\infty, f_l-l]\cup[f_r+l, \infty)$,}
\]
\[
|c''| \le \frac{4}{l^2}.
\]
Now, since $u(x, y) = c(y)d(x, y)$ is $H^1_0(D)$ for $l < \min\{ b_r^\land-f_r, f_l-b_l^\lor \}$,
it can be used as a test function
and hence
\[
a\int_D \nabla d\cdot\nabla u+\int_{D\setminus \Omega}d u
= a\int_D c\abs{\nabla d}^2+a\int_D d\nabla d\cdot\nabla c+\int_{D\setminus \Omega}c d^2 = 0.
\]
By partially integrating the second term while noting that $d\nabla d = \frac{1}{2}\nabla(d^2)$,
\[
a\int_D c\abs{\nabla d}^2-a\int_D \frac{1}{2}\Delta c d^2+\int_{D\setminus \Omega}c d^2 = 0.
\]
Therefore, we obtain
\[
\int_{\Omega} \abs{\nabla d}^2
\le \int_D \frac{1}{2}\abs{\Delta c}d^2-\frac{1}{a}\int_{D\setminus \Omega} c d^2
\le \frac{2}{l^2}\int_{D\setminus \Omega} d^2
\]
and thus the assersion of this lemma holds.
\end{proof}

In view of this lemma it follows that
\[
\int_{\Omega} \abs{s-\bar{s}}^2
\le \frac{4}{\min\{ b_r^\land-f_r, f_l-b_l^\lor \}}C_a^2.
\]
Therefore, we got the conclusion.

\begin{theorem}[Analysis for the straight film shapes]
\label{t:film}
Assume
\[
D = \{ (x, y) \in \mathbb{T}^N\times\mathbb{R} \mid b_l(x) < y < b_r(x)\},
\quad \Omega = \mathbb{T}^N\times(f_l, f_r)
\]
satisfies \eqref{e:assum}.
Then,
\[
\norm{\frac{1}{h_{\vb*{s}}^a}-\frac{1}{T}}_{L^2(\Omega)}
\le \frac{2}{\sqrt{T}^3}\sqrt{a}+\frac{4}{\sqrt{T}}\exp\qty(-2\frac{\bar{m}}{\sqrt{a}})+\frac{1}{\sqrt{m}}\exp\qty(-\frac{R}{\sqrt{a}})
\]
with $T = f_r-f_l > 0$, $\bar{m} = \min\{ b_r^\lor-f_r, f_l-b_l^\land \} > 0$, $m = \min\{ b_r^\land-f_r, f_l-b_l^\lor \} > 0$ and $R$ as in \eqref{e:assum}.
In particular, we have
\[
\int_{\Omega} \abs{\frac{1}{h_{\vb*{s}}^a}-\frac{1}{T}}^2 \to 0
\]
as $a \to 0$.
\end{theorem}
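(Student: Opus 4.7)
The plan is to introduce as an intermediary the reference solution $\bar{\vb*{s}}$ on the circumscribing rectangle $\bar D$ constructed earlier in this section, and split via the triangle inequality
\[
\norm{\frac{1}{h_{\vb*{s}}^a}-\frac{1}{T}}_{L^2(\Omega)}
\le \norm{\frac{1}{h_{\vb*{s}}^a}-\frac{1}{h_{\bar{\vb*{s}}}^a}}_{L^2(\Omega)}
+ \norm{\frac{1}{h_{\bar{\vb*{s}}}^a}-\frac{1}{T}}_{L^2(\Omega)}.
\]
Because $s^x \equiv 0$ has already been established, $\div\vb*{s} = s_y$ and $\div\bar{\vb*{s}} = \bar s'$, so the first summand equals $\tfrac{\sqrt{a}}{2}\norm{s_y - \bar s'}_{L^2(\Omega)}$, which is precisely the $H^1$-type quantity the Interior $H^1$ Estimate Lemma is designed to control.

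For the reference term, I will observe that on $\Omega$ the function $\bar s(y)$ is the affine interpolant from Section~\ref{s:1dim}, so $h_{\bar{\vb*{s}}}^a$ is a constant and equals the one-dimensional thickness function analysed there. The estimate from Section~\ref{s:1dim} applies verbatim, with $m$ replaced by $\bar m$, giving $0 \le h_{\bar s}^a - T \le 2\sqrt{a} + 4T\exp(-2\bar m/\sqrt{a})$. Since $h_{\bar s}^a \ge T$, this yields the pointwise reciprocal bound
\[
\abs{\frac{1}{h_{\bar s}^a} - \frac{1}{T}}
\le \frac{h_{\bar s}^a - T}{T^2}
\le \frac{2\sqrt{a}}{T^2} + \frac{4}{T}\exp\qty(-\frac{2\bar m}{\sqrt{a}}),
\]
and integrating over $\Omega$, whose measure is $T$, produces the first two terms of the stated bound, namely $\frac{2}{\sqrt{T}^3}\sqrt{a}$ and $\frac{4}{\sqrt{T}}\exp(-2\bar m/\sqrt{a})$.

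For the comparison term, I will chain together the two preparatory results already assembled in this section. First, direct evaluation of $|\bar s|$ on the wavy boundary $\partial D$ using the explicit $\sinh/\cosh$ formulas (together with the fact that $s = 0$ there) produces $\sup_{\partial D}|s - \bar s| \le C_a = a^{-1/2}\exp(-R/\sqrt{a})$ under hypothesis \eqref{e:assum}. Because both $s$ and $\bar s$ satisfy the same scalar weak form \eqref{e:swf} on $D$ -- the right-hand side depends only on $\chi$ and $\Omega$, which are common to both problems -- their difference $d = s - \bar s$ solves the homogeneous equation \eqref{e:shwf} on $D$, so the Maximum Principle Lemma propagates the boundary bound to $\norm{d}_{L^\infty(D)} \le C_a$. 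The Interior $H^1$ Estimate Lemma then gives
\[
\int_\Omega (s_y - \bar s')^2
\le \int_\Omega \abs{\nabla d}^2
\le \frac{2}{m^2}\int_{D\setminus\Omega} d^2,
\]
which when combined with the $L^\infty$ bound and rescaled by $\sqrt{a}/2$ cancels the $1/\sqrt{a}$ hidden in $C_a$ and yields the third term $\frac{1}{\sqrt{m}}\exp(-R/\sqrt{a})$ after collecting geometric constants.

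The substantive analysis has already been carried out in the preceding lemmas, so the main obstacle here is bookkeeping rather than new content: one must verify carefully that $d = s - \bar s$ really solves the homogeneous problem on $D$ (which rests on the source term depending only on $\Omega$ and $\chi$, not on the ambient domain, so that $s$ and $\bar s$ share the same right-hand side despite living naturally on different domains), and then trace the powers of $\sqrt{a}$, $T$, $m$, $\bar m$ through the reciprocal $h_{\vb*{s}}^a = 2/(\sqrt{a}\div\vb*{s})$ so that the three decay mechanisms -- interior linear-interpolation error in $\sqrt{a}$, boundary-layer error in $\exp(-2\bar m/\sqrt{a})$, and wavy-boundary discrepancy in $\exp(-R/\sqrt{a})$ -- assemble into the stated three-term bound.
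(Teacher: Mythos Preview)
Your proposal is correct and follows essentially the same route as the paper: compare $s$ to the reference solution $\bar s$ on the circumscribing rectangle $\bar D$, use the one-dimensional theorem for the $\bar s$ versus $T$ contribution, then use the maximum principle for the $L^\infty$ bound on $d=s-\bar s$ and the interior $H^1$ lemma for the gradient bound on $\Omega$. Your explicit triangle-inequality split and the reciprocal bookkeeping $\abs{1/h_{\bar s}^a-1/T}\le (h_{\bar s}^a-T)/T^2$ are in fact more carefully spelled out than the paper's own closing lines.
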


\section{Remarks on the assumption \eqref{e:assum}}
\label{s:assum}

In the previous section, we show that the thickness function $h_{\vb*{s}}^a$ converges to a constant thickness of the film shape under the assumption \eqref{e:assum},
but it turns out that this assumption is not essential from a numerical point of view.
Indeed, we cannot observe a big gap between the local thickness of numerical solutions by finite element method (FEM) and exact one as in Fig.\ \ref{f:assum}.

In this figure, we calculate the local thickness function by solving the equation \eqref{e:swf} numerically via FEM under the settings where $b_l \equiv 0.0$, $f_l = 0.5$, $f_r = 0.99$, $\max b_r = 3.0$, $\min b_r = 1.0, 2.0, 3.0$ and $a = 0.0001, 0.000001$.
More pricisely, we set
\[
D = \{ (x, y) \in \mathbb{T}\times\mathbb{R} \mid 0.0 < y < 3.0-k\sin(\pi x)^2 \},
\quad \Omega = \mathbb{T}\times(0.5, 0.99)
\]
with $k = 0.0, 1.0, 2.0$.
Note that the cases $k = 1.0, 2.0$ do not satisfy the assumption \eqref{e:assum}.

In the figure, the part where the value of the local thickness is around $0.5$ (greater than $0.3$ and less than $0.7$) is filled in green.
The orange part is the void domain.
Since the constant thickness is $f_r-f_i = 0.49$, we can say that the local thickness approximates the exact thickness even when the assumption \eqref{e:assum} does not hold.

\begin{figure}[t]
\centering
\begin{tabular}{cccc}
 & $\min b_r = 3.0$ & $\min b_r = 2.0$ & $\min b_r = 1.0$ \\
$a = 10^{-4}$
& \includegraphics[width=.23\textwidth,bb=0 0 1707 961]{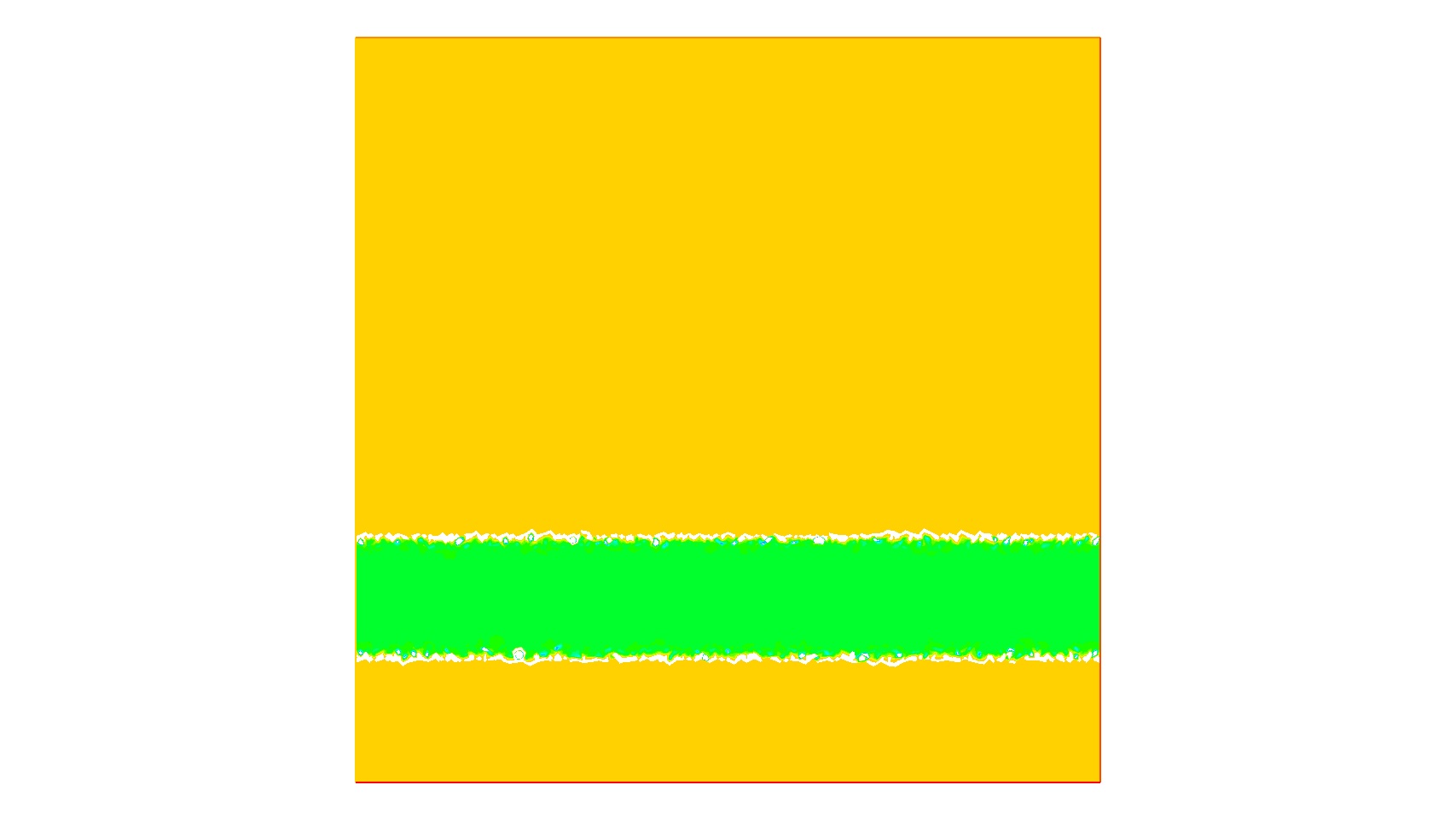}
& \includegraphics[width=.23\textwidth,bb=0 0 1707 961]{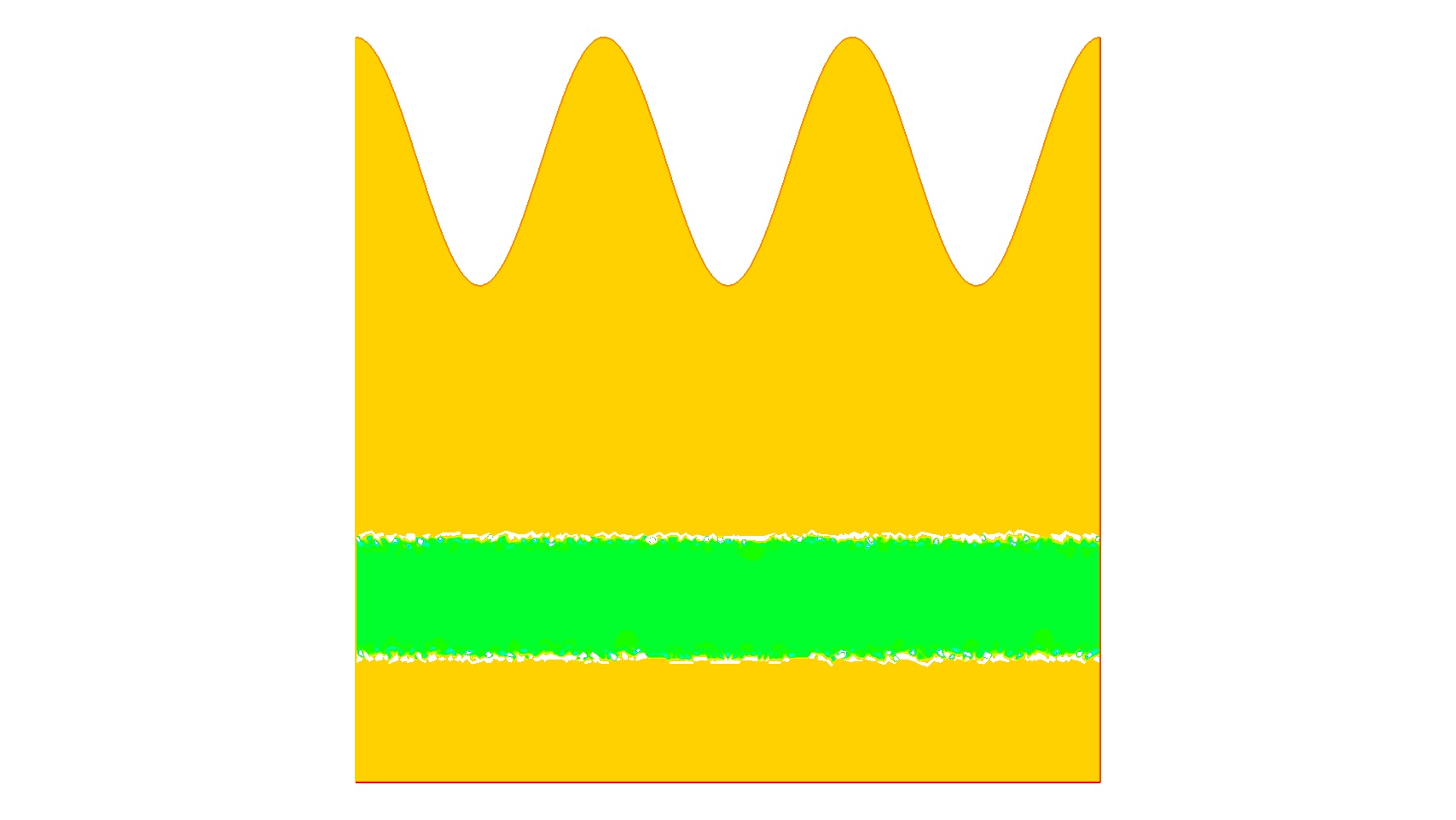}
& \includegraphics[width=.23\textwidth,bb=0 0 1707 961]{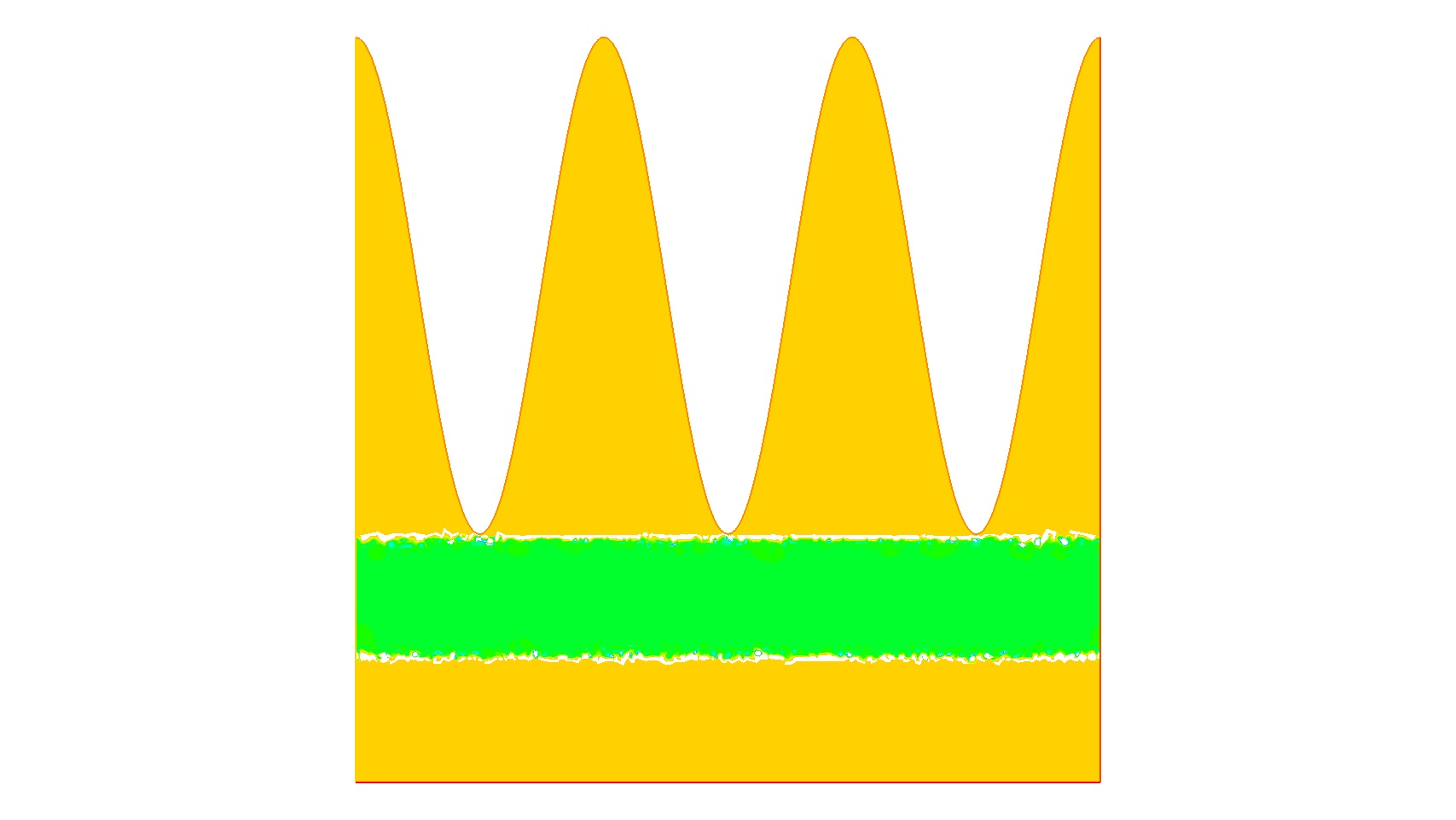} \\
$a = 10^{-6}$
& \includegraphics[width=.23\textwidth,bb=0 0 1707 961]{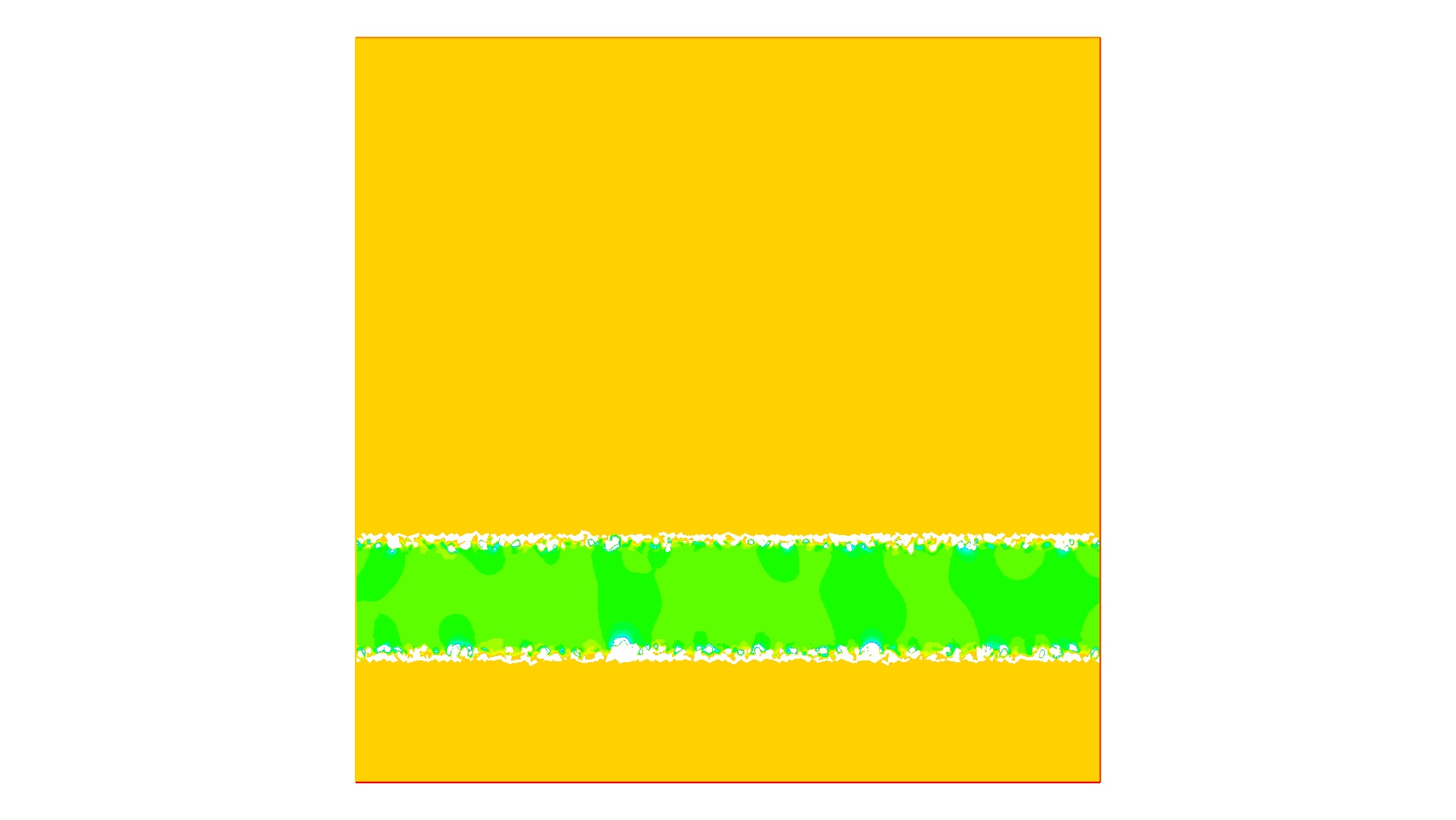}
& \includegraphics[width=.23\textwidth,bb=0 0 1707 961]{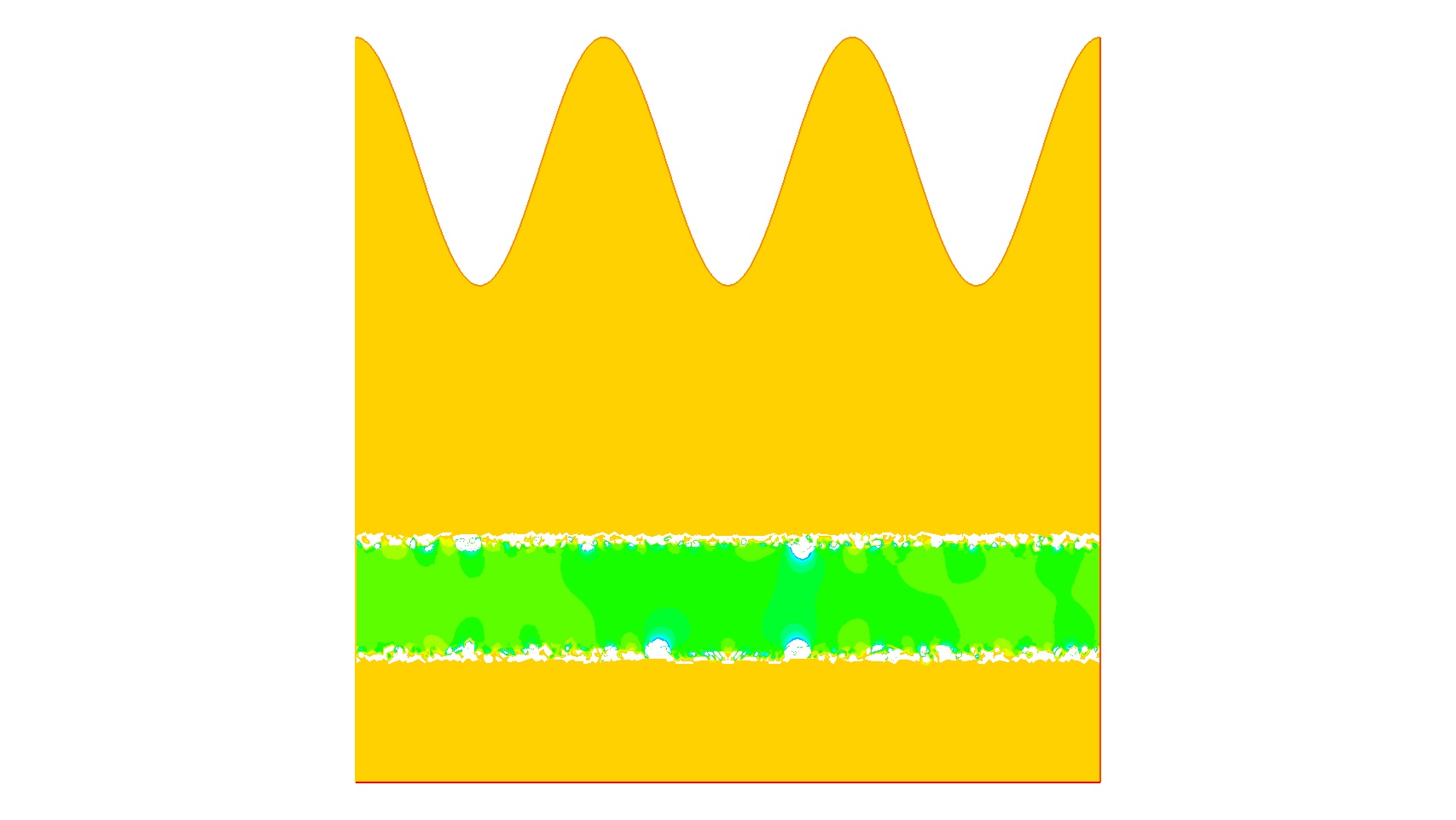}
& \includegraphics[width=.23\textwidth,bb=0 0 1707 961]{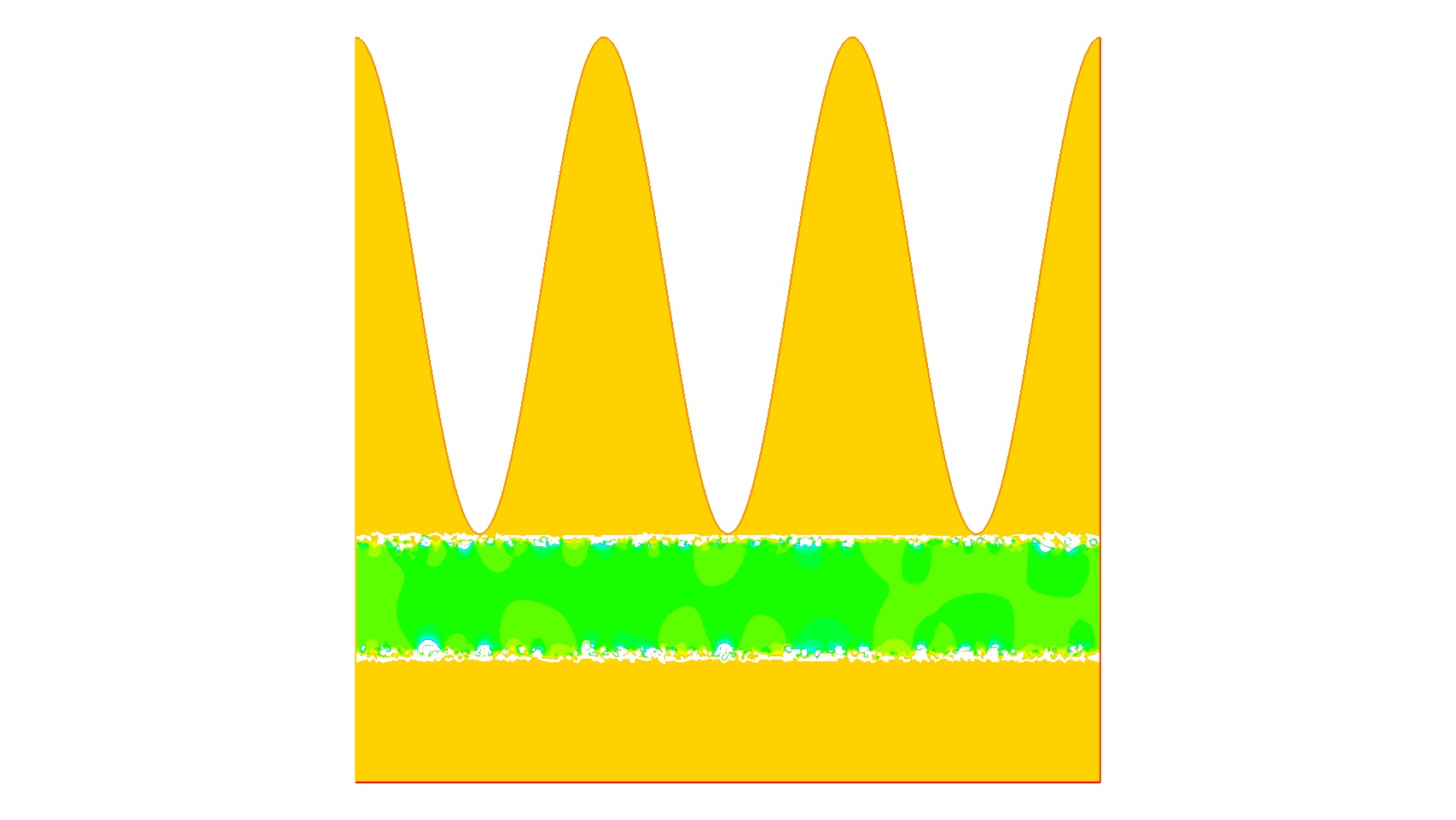} \\
\end{tabular}
\caption{Numerical thickness function via FEM with $b_l \equiv 0.0$, $f_l = 0.5$, $f_r = 0.99$, $\max b_r = 3.0$.}
\label{f:assum}
\end{figure}

\section{Further study}
\label{s:further}

Recall the argument of our main result in Section \ref{s:film}.
The contents of this section are divided into four steps:
\begin{enumerate}
\item
Reduce the system \eqref{e:wf2} to the single equation \eqref{e:swf}.
\item
Construct the reference solution $\bar{s}$.
\item
Get $L^\infty$ estimate via the maximum principle.
\item
Get $H^1$ estimate via the internal $H^1$ principle.
\end{enumerate}

When applying this argument to other shapes such as an annulus,
it may be difficult for us to reduce the system to a single equation.
However, since the maximum principle and the internal $H^1$ estimate can be extended to the system, we can obtain a similar result for other shapes without reduction.

\begin{theorem}[Maximum modulus principle for homogeneous systems]
Let $\vb*{d} \in H^1(D)^N$ be a weak solution of the homogeneous equation \eqref{e:hwf} not necessarily satisfying the boundary conditions.
We then have
\[
\sup_{D}\abs{\vb*{d}} \le \sup_{\partial D}\abs{\vb*{d}}.
\]
\end{theorem}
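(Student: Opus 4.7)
The plan is to mimic the scalar lemma proved earlier by choosing a vector-valued test function targeted at the super-level set of $|\vb*{d}|$. Set $M := \sup_{\partial D}|\vb*{d}|$ (assumed finite; otherwise nothing to prove) and define the scalar cutoff $\psi := (|\vb*{d}|^2 - M^2)_+$. The intended test function is $\vb*{u} := \vb*{d}\,\psi$; since $|\vb*{d}|^2 \le M^2$ on $\partial D$ in the trace sense, $\psi$ vanishes there, so $\vb*{u} \in H^1_0(D)^N$ modulo the regularity point below.

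The algebraic heart of the argument is the Leibniz identity
\[
\nabla \vb*{d} : \nabla(\vb*{d}\,\psi) = \psi\,|\nabla \vb*{d}|^2 + \frac{1}{2}\,\nabla\psi \cdot \nabla(|\vb*{d}|^2),
\]
together with the observation that $\nabla\psi = \nabla(|\vb*{d}|^2)$ on $\{|\vb*{d}|^2 > M^2\}$ and both sides vanish off this set, so the cross term integrates to $\frac{1}{2}\int_D |\nabla\psi|^2$. Substituting $\vb*{u}$ into the homogeneous weak form \eqref{e:hwf} then gives
\[
a\int_D \psi\,|\nabla \vb*{d}|^2 + \frac{a}{2}\int_D |\nabla\psi|^2 + \int_{D\setminus\Omega} \psi\,|\vb*{d}|^2 = 0.
\]
Each of the three integrands is non-negative, hence each vanishes; in particular $\nabla\psi \equiv 0$ a.e., and since $\psi \in H^1_0(D)$ this forces $\psi \equiv 0$, i.e.\ $|\vb*{d}| \le M$ a.e.\ in $D$, which is the desired maximum modulus inequality.

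The one substantive obstacle is verifying that $\vb*{u} = \vb*{d}\,\psi$ actually lies in $H^1_0(D)^N$: a priori $\vb*{d} \in H^1(D)^N$ need not be bounded in dimensions $\ge 2$, so neither $|\vb*{d}|^2$, $\psi$, nor the product $\vb*{d}\,\psi$ is obviously in $H^1$. The standard remedy is a truncation argument: for each $k > M$, replace $|\vb*{d}|^2$ by $(|\vb*{d}| \wedge k)^2$ in the definition of $\psi$ and truncate the multiplicative copy of $\vb*{d}$ pointwise at level $k$, so that the resulting test function lies in $H^1_0(D)^N$; the same non-negativity argument yields that the truncated cutoff is identically zero, and monotone convergence as $k \to \infty$ concludes. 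An equivalent and perhaps cleaner alternative is to apply the scalar maximum principle (the previous lemma) to $v := |\vb*{d}|$ after verifying through a Kato-type chain-rule inequality that $v$ is a scalar subsolution of the same homogeneous equation, thereby outsourcing the regularity technicalities to standard Sobolev chain-rule estimates.
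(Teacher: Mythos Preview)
Your argument is correct, including the honest flag on the regularity of $\vb*{u}=\vb*{d}\,(|\vb*{d}|^2-M^2)_+$ and the standard truncation fix. However, the paper takes a much shorter route that exploits the diagonal structure of the system \eqref{e:hwf}: for any fixed unit vector $\vb*{e}$, the scalar function $d^{\vb*{e}}:=\vb*{d}\cdot\vb*{e}$ is itself a weak solution of the scalar homogeneous equation \eqref{e:shwf}, so the already-proved scalar maximum modulus principle gives $\sup_D|d^{\vb*{e}}|\le\sup_{\partial D}|d^{\vb*{e}}|\le\sup_{\partial D}|\vb*{d}|$; taking the supremum over directions (or choosing $\vb*{e}$ aligned with $\vb*{d}$ at a putative interior maximum) yields the vector estimate immediately. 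This sidesteps the product-regularity issue entirely, since the scalar lemma only needs $(d^{\vb*{e}}-c)_+\in H^1_0(D)$, which is automatic. Your approach, by contrast, does not rely on the system decoupling and would extend to genuinely coupled elliptic systems with the same zeroth-order sign structure, at the price of the truncation bookkeeping you describe.
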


\begin{proof}
Let $x_0 \in D$ be a one of the maximum points of $\abs{\vb*{d}}$,
and there exists a direction (unit vector) $\vb*{e}$ such that $\vb*{d}(x_0)\cdot \vb*{e} = \abs{\vb*{d}(x_0)}$.
Here, $d^{\vb*{e}}(x) = \vb*{d}(x)\cdot \vb*{e}$ attains its maximum at $x = x_0$ and it is a weak solution of the equation
\[
a\int_{D} \nabla d^{\vb*{e}}\cdot\nabla u+\int_{D\setminus \Omega}d^{\vb*{e}} u = 0
\quad \forall u \in H^1_0(D),
\]
so from the maximum modulus principle for single equations,
we see that $\sup_D \abs{\vb*{d}} \le \sup_{\partial D} \abs{\vb*{d}}$ follows.
\end{proof}

\begin{theorem}[Internal $H^1$ estimate of homogeneous system]
Let $\vb*{d} \in H^1(D)^N$ be a weak solution of the homogeneous equation \eqref{e:hwf} not necessarily satisfying the boundary conditions.
In this case, there exists a constant $C$ that is determined only by $D$ and $\Omega$ such that the inequality
\[
\int_{\Omega} \abs{\nabla \vb*{d}}^2 \le C\int_{D} \abs{\vb*{d}}^2
\]
holds.
\end{theorem}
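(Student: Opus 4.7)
The system \eqref{e:hwf} decouples componentwise: testing with $\vb*{u}$ supported in a single slot shows that each component $d^{x_i}$ of $\vb*{d}$ is itself a scalar weak solution of \eqref{e:shwf}. One could therefore sum the scalar internal $H^1$ estimate over $i$; however, since the scalar lemma was stated in the special film geometry, my plan is to repeat its one-line energy argument directly at the vector level for a general pair $\overline{\Omega}\subset D$.

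First I would fix a cutoff $c\in W^{2,\infty}(\mathbb{R}^{N+1})$ with $0\le c\le 1$, $c\equiv 1$ on $\Omega$, $c\equiv 0$ in a neighborhood of $\partial D$, and $\norm{\Delta c}_{L^\infty(D)}$ depending only on $D$ and $\Omega$; such a $c$ is built just as the piecewise-quadratic cutoff in the scalar lemma, for instance by mollifying the distance function to $\overline{\Omega}$. Then I would test \eqref{e:hwf} against $\vb*{u}=c\vb*{d}\in H^1_0(D)^N$. The vectorial chain-rule identity
\[
\nabla \vb*{d}\colon \nabla(c\vb*{d}) = c\abs{\nabla \vb*{d}}^2+\tfrac{1}{2}\nabla c\cdot \nabla(\abs{\vb*{d}}^2),
\]
which is the scalar identity $d\nabla d=\tfrac{1}{2}\nabla(d^2)$ applied slot by slot, together with an integration by parts of the second term (no boundary contribution, since $c$ vanishes near $\partial D$), would turn the weak form into
\[
a\int_D c\abs{\nabla \vb*{d}}^2-\frac{a}{2}\int_D (\Delta c)\abs{\vb*{d}}^2+\int_{D\setminus \Omega} c\abs{\vb*{d}}^2 = 0.
\]
Dropping the nonnegative last term and using $c\equiv 1$ on $\Omega$ would then give
\[
\int_\Omega \abs{\nabla \vb*{d}}^2 \le \tfrac{1}{2}\norm{\Delta c}_{L^\infty(D)}\int_D \abs{\vb*{d}}^2,
\]
which is the asserted inequality with $C=\tfrac{1}{2}\norm{\Delta c}_{L^\infty(D)}$, a constant determined only by $D$ and $\Omega$.

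The main subtleties I anticipate are essentially bookkeeping: justifying the vector chain-rule identity for $H^1$ fields (routine by density of smooth functions) and constructing the cutoff with $\norm{\Delta c}_{L^\infty}$ of order $\operatorname{dist}(\overline{\Omega},\partial D)^{-2}$ in arbitrary dimension. Beyond these, no new analytic ingredient is needed — the parameter $a$ cancels on both sides, and the only monotonicity input is nonnegativity of the $(1-\chi)$ term in \eqref{e:hwf}, exactly as in the scalar lemma.
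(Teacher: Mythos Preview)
Your proposal is correct and follows essentially the same route as the paper: test \eqref{e:hwf} with $\vb*{u}=c\vb*{d}$ for a cutoff $c$ equal to $1$ on $\Omega$ and compactly supported in $D$, use the identity $\nabla\vb*{d}\colon\nabla(c\vb*{d})=c\abs{\nabla\vb*{d}}^2+\tfrac{1}{2}\nabla c\cdot\nabla(\abs{\vb*{d}}^2)$, integrate the second term by parts, and drop the nonnegative contribution from $D\setminus\Omega$. The only cosmetic differences are that the paper takes $c\in C^\infty_c(D)$ rather than $W^{2,\infty}$ and writes the chain-rule identity via the tensor product $\vb*{d}\otimes\nabla c$; your explicit identification $C=\tfrac{1}{2}\norm{\Delta c}_{L^\infty(D)}$ and the remark on componentwise decoupling are welcome additions but not departures from the paper's argument.
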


\begin{proof}
Let us take the nonnegative smooth function $c \in C^\infty_c(D)$ with a compact support such that $c = 1$ on $\Omega$.
In this case, by testing $\vb*{u} = c \vb*{d}$, since $\nabla \vb*{u} = \vb*{d}\otimes\nabla c+c\nabla \vb*{d}$ we have
\[
a\int_D c\abs{\nabla \vb*{d}}^2+a\int_D \nabla \vb*{d}\colon \vb*{d}\otimes\nabla c+\int_{D\setminus \Omega}c\abs{\vb*{d}}^2 = 0.
\]
Here, $\otimes$ represents the tensor product and the identity $\nabla \vb*{d}\colon \vb*{d}\otimes\nabla c = \frac{1}{2}\nabla(\abs{\vb*{d}}^2)\cdot\nabla c$ gives us
\[
a\int_D c\abs{\nabla \vb*{d}}^2-a\int_D \frac{1}{2}\Delta c\abs{\vb*{d}}^2+\int_{D\setminus \Omega}c\abs{\vb*{d}}^2 = 0.
\]
Therefore,
\[
\int_{\Omega} \abs{\nabla \vb*{d}}^2
\le \int_D c\abs{\nabla \vb*{d}}^2
\le \frac{1}{2}\int_D \abs{\Delta c}\abs{\vb*{d}}^2-\frac{1}{a}\int_{D\setminus \Omega}c\abs{\vb*{d}}^2
\le C\int_D \abs{\vb*{d}}^2,
\]
which is nothing but the conclusion of this theorem.
\end{proof}

A detailed calculation is left for future work.


\ethics{Competing Interests}{
This study was funded by JSPS KAKENHI Grant Number JP23H03800.
The authors have no conflicts of interest to declare that are relevant to the content of this chapter.
}

\end{document}